\newtheorem{thm}{Theorem}
\newtheorem{lemma}[thm]{Lemma}
\theoremstyle{definition}
\newcommand{\R}{\mathbb{R}}
\newcommand{\E}{\mathbb{E}}
\newcommand{\energy}{\mathcal{E}}
\newcommand{\N}{\mathbb{N}}
\newcommand{\C}{\mathbb{C}}
\renewcommand{\P}{\mathbb{P}}
\DeclareMathOperator{\Hess}{Hess}
\newcommand{\inprod}[2]{\left\langle #1, #2 \right\rangle}
\newcommand{\Unitary}[1]{\mathbb{U}\left(#1\right)}
\newcommand{\ind}[1]{\mathbbm{1}_{#1}}
\thanks{\footnotemark {$^\dagger$} Supported in part by NSF DMS 1612589.}
\author{Elizabeth Meckes{$^\dagger$}}
\author{Kathryn Stewart{$^\dagger$}}
\address{Department of Mathematics, Applied Mathematics, and
  Statistics, Case Western Reserve University, 10900 Euclid Ave.,
  Cleveland, Ohio 44106, U.S.A.}
\email{elizabeth.meckes@case.edu}
\address{Department of Mathematics, Applied Mathematics, and
  Statistics, Case Western Reserve University, 10900 Euclid Ave.,
  Cleveland, Ohio 44106, U.S.A.}
\email{kathrynstewart@case.edu}
\title{On the eigenvalues of truncations of random unitary matrices}
\begin{document}

\maketitle

\begin{abstract}
We consider the empirical eigenvalue distribution of an $m\times m$
principle submatrix of an $n\times n$ random unitary matrix
distributed according to Haar measure.  Earlier work of Petz and R\'effy
identified the limiting spectral measure if $\frac{m}{n}\to\alpha$, as
$n\to\infty$; under suitable scaling, the family $\{\mu_\alpha\}_{\alpha\in(0,1)}$ of limiting
measures interpolates between uniform measure on the unit disc (for small
$\alpha$) and uniform measure on the unit circle (as $\alpha\to1$).
In this note, we prove an explicit concentration inequality which
shows that for fixed $n$ and $m$, the bounded-Lipschitz distance
between the empirical spectral measure and the corresponding
$\mu_\alpha$ is typically of order $\sqrt{\frac{\log(m)}{m}}$ or
smaller.  The approach is via the theory of two-dimensional Coulomb
gases and makes use of a new ``Coulomb transport inequality'' due to
Chafa\"i, Hardy, and Ma\"ida.  

\end{abstract}

\section{Introduction}
Let $U$ be an $n\times n$ Haar-distributed unitary matrix.
By a truncation of such a matrix, we mean a reduction to the
upper-left $m\times m$ block, for some $m\le n$.  In the case that
$m=o(\sqrt{n})$, the truncated matrix is close to a matrix of
independent, identically distributed Gaussian random variables (see Jiang \cite{Jia09}); the circular law for the Ginibre ensemble would lead one to expect that the eigenvalue distribution was approximately uniform in a disc, and  this was indeed verified by Jiang in \cite{Jia09}.  At the opposite extreme, namely $m=n$, we have the full original matrix $U$.  The eigenvalues of $U$ itself are also well-understood; it was first proved by
Diaconis and Shahshahani \cite{DiaSha94} that for a sequence $\{U_n\}$
with $U_n$ distributed according to Haar measure on $\Unitary{n}$, the
corresponding sequence of  empirical spectral
measures converges to the uniform measure on the circle, weakly in
probability. In more recent work \cite{MM13} of the first author and M.\ Meckes, it was shown that if $\mu_n$ denotes the spectral
measure of $U$  and $\nu$ is the
uniform measure on the circle, then with probability one, for $n$
large enough,
$W_1(\mu_n,\nu)\le\frac{C\sqrt{\log(n)}}{n}$ (here, $W_1(\cdot,\cdot)$
is the $L^1$-Wasserstein distance; the definition is given at the end of this
section). This result demonstrates  a stronger uniformity of the eigenvalues of such a matrix than, for example, a collection of $n$ i.i.d.\ uniform points on the circle (whose empirical measure typically has distance of the order $\frac{1}{\sqrt{n}}$ from the uniform measure).

It is thus natural to consider the evolution of the distribution of
the eigenvalues of an $m\times m$ trucation of $U$, as
$\alpha=\frac{m}{n}$ ranges from $o(1)$ to $1-o(1)$.  Figure \ref{F:ews}
shows simulations of the eigenvalues of truncations for various values
of $\alpha$: in it, one can see the thinning out of the distribution
in the center of the disc, as more of the original matrix is kept
and the eigenvalues move from being uniform on a disc to uniform on
the circle.

\begin{figure}\label{F:ews}
   \includegraphics[width=1.5in]{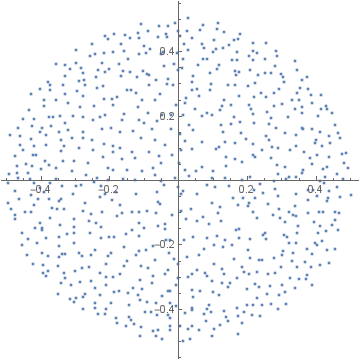}\qquad\includegraphics[width=1.5in]{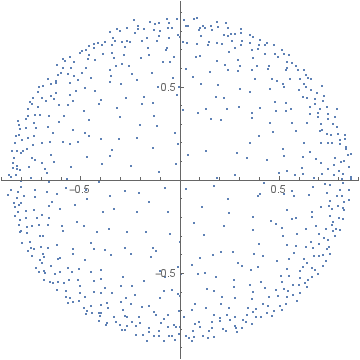}\qquad\includegraphics[width=1.5in]{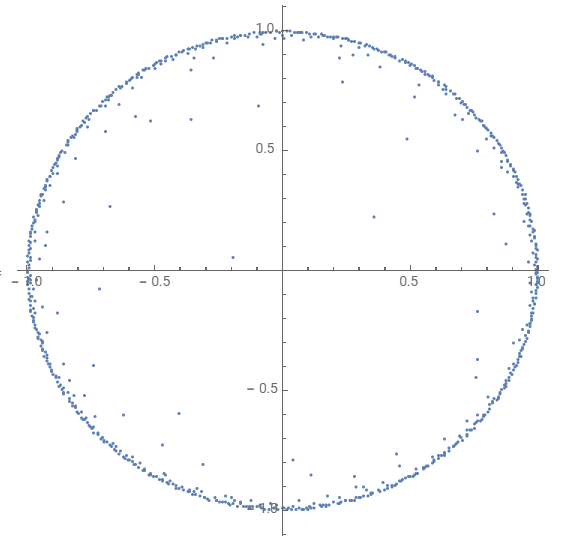}\caption{The
   eigenvalues of an $m\times m$  truncation of a $n\times n$
   Haar-distributed unitary matrix, with $\frac{m}{n}=.25$,
   $\frac{m}{n}=.75$, and$\frac{m}{n}\approx.99$.}
\end{figure}

In fact, the exact eigenvalue distribution of an $m\times m$ submatrix
with $m<n$ is
known (see \cite{ZycSom00}; also \cite{PetRef05}).  For our purposes,
it is most natural to consider the eigenvalues of an $m\times m$
truncation rescaled by $\sqrt{\frac{n}{m}}$; under this scaling, the
joint density of the eigenvalues is supported on $\left\{|z|<\sqrt{\frac{n}{m}}\right\}^n$ and has
density there given by 
\begin{equation}\label{E:density-unnormalized}\frac{1}{c_{n,m}}\prod_{1\le j<k\le m}|z_j-z_k|^2\prod_{j=1}^m\left(1-\frac{m}{n}|z_j|^2\right)^{n-m-1},\end{equation}
where 
\[c_{n,m}=\pi^m m!\left(\frac{n}{m}\right)^{ m(m+1)/2}\prod_{j=0}^{m-1}\binom{n-m+j-1}{j}^{-1}\frac{1}{n-m+j}.\]

Petz and R\'effy \cite{PetRef05} made use of the explicit eigenvalue
density to identify the large-$n$ limiting spectral measure, when
$\frac{m}{n}\to\alpha\in(0,1)$; it has radial density with respect to
Lebesgue measure on $\C$ (as it must, by rotation-invariance), given by 
\begin{equation}\label{E:falpha}f_\alpha(z)=\begin{cases}\frac{(1-\alpha)}{\pi(1-\alpha|z|^2)^2},& 0<|z|<1;\\0,& \mbox{otherwise}.\end{cases}\end{equation}

While the mathematical motivation in studying the eigenvalues of these truncations, and particularly the evolution of the ensemble as the ratio $\frac{m}{n}$ ranges from $o(1)$ to $1-o(1)$, is clear, there are also many phyical systems in which large unitary matrices play a
central role, and in which truncations of those matrices arise
naturally.  E.g., in chaotic scattering,  the amplitudes of waves coming into the
system are related to the amplitudes of outgoing waves by a large unitary matrix
(called an $S$-matrix), and the so-called transmission matrix (related
to long-lived resonances of the system) is a truncation of the
$S$-matrix.  See, e.g., \cite{FyoSom97}, where the use of random
unitary matrices in this context was explored.  

\medskip

The purpose of this paper is to give non-asymptotic results; i.e., to describe the ensemble of eigenvalues of truncations of $U\in\Unitary{n}$ for fixed (large) $n$.  Our main result on approximation of the spectral measure is the following.

\begin{thm} \label{T:main}
Let
$n,m\in\N$ with $1\le m<n$. Let
$U\in\Unitary{n}$ be distributed according to Haar measure, and let
$z_1, \ldots,
z_m$ denote the eigenvalues of the top-left $m\times m$ block of
$\sqrt{\frac{n}{m}}U$.  The joint law of $z_1,\ldots,z_m$ is denoted
$\P_{n,m}$. 

Let $\hat{\mu}_m$ be the empirical spectral measure given by 
\begin{equation*}
\hat{\mu}_m := \frac{1}{m} \sum_{i=1}^m \delta_{z_i}.
\end{equation*}
Let $\alpha=\frac{m}{n}$ and let $\mu_\alpha$ be the probability
measure on the unit disc with the density $f_\alpha$ defined in
\eqref{E:falpha}.   For any $r>0$,
\begin{equation*}
\mathbb{P}_{n,m} \Big[ d_{BL} \left( \hat{\mu}_m , \mu_{\alpha} \right) \geq r \Big]  \le e^2\exp\left\{-C_\alpha m^2r^2+2m\log(m)+C_\alpha'm\right\}+\frac{e}{2\pi}\sqrt{\frac{m}{1-\alpha}}e^{-m},\end{equation*}
where $C_\alpha=\frac{1}{128\pi(1+\sqrt{3+\log(\alpha^{-1})})^2}$ and $C_\alpha'=6+3\log(\alpha^{-1})$. \\
\end{thm}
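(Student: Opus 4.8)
The plan is to use the Coulomb transport inequality of Chafaï, Hardy, and Maïda to reduce the concentration statement to a large-deviations-type estimate for the energy of the Coulomb gas with joint density \eqref{E:density-unnormalized}. The first step is to recognize the eigenvalue density as a two-dimensional Coulomb gas at inverse temperature $\beta=2$ with external potential
\[
V_{n,m}(z) = -\frac{n-m-1}{m}\log\left(1-\frac{m}{n}|z|^2\right)
\]
on the disc $\{|z|<\sqrt{n/m}\}$ (extended to $+\infty$ outside), so that $\P_{n,m}$ has the Gibbs form $\frac{1}{Z}\exp\{-m^2 \energy_{V_{n,m}}(\hat\mu_m) - (\text{lower order})\}$ in terms of the weighted logarithmic energy $\energy_{V_{n,m}}(\mu)=\iint\log|z-w|^{-1}\,d\mu(z)d\mu(w)+\int V_{n,m}\,d\mu$. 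One must check that $\mu_\alpha$ with density $f_\alpha$ is (close to) the equilibrium measure for $V_{n,m}$: compute $\Delta V_{n,m}$ and verify that $\frac{1}{2\pi}\Delta V_{n,m}$ restricted to the appropriate droplet matches $f_\alpha$ up to $O(1/m)$ corrections, and that the effective potential is constant there. The Coulomb transport inequality then gives a bound of the form $d_{BL}(\hat\mu_m,\mu_\alpha)^2 \le C_\alpha\big(\energy_{V_{n,m}}(\hat\mu_m)-\energy_{V_{n,m}}(\mu_\alpha)\big) + (\text{small correction})$, where the constant $C_\alpha$ depends on the potential through a log-Sobolev/transport constant for $\mu_\alpha$; this is the source of the explicit $C_\alpha=\frac{1}{128\pi(1+\sqrt{3+\log(\alpha^{-1})})^2}$ in the statement, and tracking that constant carefully through the CHM inequality is one of the delicate bookkeeping points.

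The second step is to convert the energy gap into a probability bound. Writing $r^2 \le$ (energy gap) $\cdot$ const, we have
\[
\P_{n,m}\big[d_{BL}(\hat\mu_m,\mu_\alpha)\ge r\big] \le \P_{n,m}\big[\energy_{V_{n,m}}(\hat\mu_m) - \energy_{V_{n,m}}(\mu_\alpha) \ge C_\alpha^{-1} r^2 - \eta_m\big],
\]
where $\eta_m$ collects the discrepancy between $\mu_\alpha$ and the true equilibrium measure and the self-energy (diagonal) regularization terms. To estimate the right-hand side one writes the Gibbs measure explicitly, bounds the partition function $c_{n,m}$ from below using its closed form given in the excerpt (here Stirling's formula applied to the product of binomial coefficients produces the $2m\log m$ and $C_\alpha' m$ terms in the exponent, and the stray factor $e^2$), and uses a standard smoothing/regularization argument: replace each $\delta_{z_i}$ by a small disc or Gaussian bump of radius $\sim 1/m$ so the logarithmic energy becomes finite, incurring only an $O(\log m /m)$ error per particle. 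The contribution near the boundary $|z|\approx\sqrt{n/m}$, where the potential $V_{n,m}$ degenerates, must be controlled separately; this is where the second term $\frac{e}{2\pi}\sqrt{\frac{m}{1-\alpha}}e^{-m}$ comes from — it is (a bound on) the probability that some eigenvalue escapes the bulk droplet, estimated by integrating the one-point function against the potential near the edge, or via a union bound using the explicit density \eqref{E:density-unnormalized}.

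The main obstacle I expect is the two-layered constant-tracking: the CHM transport inequality must be applied with a version of the equilibrium measure (namely $\mu_\alpha$, which is only approximately the finite-$n$ equilibrium measure), so one needs a \emph{perturbative} form of the transport inequality — either CHM as stated allows an arbitrary reference measure with a quantitative penalty, or one must show $\mu_\alpha$ satisfies the relevant functional inequality with the stated explicit constant. Verifying that $f_\alpha$ has the right log-Sobolev or Poincaré-type constant (controlling its density ratio and the geometry of its support — a disc, so convex, which helps) to yield exactly $C_\alpha=\frac{1}{128\pi(1+\sqrt{3+\log(\alpha^{-1})})^2}$ requires estimating $\sup f_\alpha$ and the modulus-of-continuity constants; since $f_\alpha(z)=\frac{1-\alpha}{\pi(1-\alpha|z|^2)^2}$ blows up like $(1-\alpha)^{-1}$ as $\alpha\to1$, one sees why $\log(\alpha^{-1})$ appears. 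The remaining steps — Stirling asymptotics for $c_{n,m}$ and the edge-escape bound — are routine but must be done with explicit (not merely asymptotic) constants to match the non-asymptotic conclusion.
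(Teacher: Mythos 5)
Your high-level skeleton matches the paper's: view the eigenvalue density as a 2D Coulomb gas with potential $V_{n,m}$, use the Chafa\"i--Hardy--Ma\"ida transport inequality to bound $d_{BL}$ by a normalized energy gap, mollify the empirical measure so the energy is finite, bound the partition function, and control the edge separately via Theorem \ref{T:edge}. That much is correct and is exactly the paper's route. However, several concrete attributions in your proposal would send you the wrong way if you tried to carry them out.

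First, there is no ``perturbative transport inequality'' needed: the paper works with $\alpha=\frac{m}{n}$ \emph{fixed}, and $\mu_\alpha$ is taken to be the equilibrium measure of $V_{n,m}$ itself; the transport inequality (Lemma \ref{L:transport}) is applied directly, not to an approximate equilibrium measure. Second, the constant $C_\alpha$ has nothing to do with a log-Sobolev or Poincar\'e constant of $f_\alpha$, nor with $\sup f_\alpha$: in Lemma \ref{L:transport} the constant is purely geometric, $\vol(B_{4R})$ where $B_R$ contains the support, and $R=1+\eta_\alpha+\epsilon$ with $1+\eta_\alpha=\sqrt{3+\log(\alpha^{-1})}$ chosen so that Theorem \ref{T:edge} gives the $\frac{e}{2\pi}\sqrt{\frac{m}{1-\alpha}}e^{-m}$ edge term. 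Your heuristic that $\log(\alpha^{-1})$ appears because $f_\alpha$ blows up as $\alpha\to1$ is backwards: $\log(\alpha^{-1})\to 0$ as $\alpha\to1$ and grows as $\alpha\to 0$; the dependence tracks the radius at which the outlier probability decays, not the density's sup norm. Third, the partition function $Z_{n,m}$ is bounded from below by a one-line Jensen argument against $\mu_\alpha^{\otimes m}$, not by Stirling applied to the closed form of $c_{n,m}$, and the $2m\log m$ term does \emph{not} come from there: it comes from the mollification. Because the Taylor-error bound on $(V_{n,m}*\lambda_\epsilon - V_{n,m})$ in Lemma \ref{L:reg} contributes a term of order $m^2\epsilon$ (not $m\epsilon$), one is forced to take $\epsilon\lesssim m^{-2}$, and then $m\,\mathcal{E}(\lambda_\epsilon)\approx m\log(1/\epsilon)\approx 2m\log m$. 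Your proposed bump radius $\sim 1/m$ would give only $m\log m$ but would leave the $m^2\epsilon$ term of order $m$ rather than $O(1)$, so you would not recover the stated constants without noticing this. None of these are fatal to the \emph{strategy}, but each one would produce a wrong constant or exponent, and the last one is exactly the kind of bookkeeping point the theorem's explicit form is sensitive to.
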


The bounds in Theorem \ref{T:main} are tight enough that we can in fact treat the evolution of the process of spectral measures of truncations of $U$, as the truncation ratio $\alpha$ ranges from $o(1)$ to $1-o(1)$.

\begin{thm}\label{T:union}
Let $U$ be an $n\times n$ Haar-distributed matrix in $\Unitary{n}$
and, for  $1\le m < n$, let $\hat{\mu}_m$ be the empirical spectral measure
 of the top-left $m\times m$ block of$\sqrt{\frac{n}{m}}U$.  Let $\alpha=\frac{m}{n}$, 
 $\mu_{\alpha}$ be the probability measure on the disc with
 density $f_{\alpha}$ as in \eqref{E:falpha}, and let
 $\{k_n\}_{n\ge 1}\subseteq\N$ be such that $k_n=o(n)$ and $\frac{k_n}{\log(n)^2}\to\infty$.   Then with probability
1, for $n$ large enough,
\begin{align*}
d_{BL} \left( \hat{\mu}_m , \mu_{\alpha} \right) & \leq \delta_m
\end{align*}
for every $m \in\{k_n,\ldots,n-1\}$, where
\begin{align*}
\delta_m & = \begin{cases} 48\sqrt{ \frac{2\pi \log m}{m}}, & m \geq \frac{n}{e} ;\\
\frac{165\sqrt{\log(\frac{n}{m})\log(n)}}{\sqrt{m}}, & m < \frac{n}{e}.\end{cases}
\end{align*}
\end{thm}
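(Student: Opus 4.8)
The plan is to deduce Theorem~\ref{T:union} from the non-asymptotic estimate of Theorem~\ref{T:main} by a Borel--Cantelli argument. Concretely, I would apply Theorem~\ref{T:main} with $r=\delta_m$ to each admissible pair $(n,m)$, then sum the resulting tail probabilities first over $m\in\{k_n,\ldots,n-1\}$ and then over $n$, and check that the total is finite; since only summability is needed (the ``easy'' half of Borel--Cantelli), no independence among the truncations of different sizes is required. With $r=\delta_m$, Theorem~\ref{T:main} produces two contributions, the exponential term $e^2\exp\{-C_\alpha m^2\delta_m^2+2m\log(m)+C_\alpha'm\}$ and the additive term $\frac{e}{2\pi}\sqrt{\frac{m}{1-\alpha}}\,e^{-m}$. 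The additive term is easy to dispose of: since $m\le n-1$ we have $1-\alpha=\frac{n-m}{n}\ge\frac1n$, so $\sqrt{\frac m{1-\alpha}}\le\sqrt{mn}\le n$, and this term is at most $\frac{e}{2\pi}n\,e^{-m}\le\frac{e}{2\pi}n\,e^{-k_n}$.

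The substance of the argument is to show that the exponent $-C_\alpha m^2\delta_m^2+2m\log(m)+C_\alpha'm$ is $\le-m$ for all large $n$ and all $m$ in range; this is where the two cases in the definition of $\delta_m$ appear and where the constants $48$ and $165$ are forced. When $m\ge\frac ne$ we have $\alpha\ge e^{-1}$, hence $\log(\alpha^{-1})\le1$, so $C_\alpha\ge\frac1{1152\pi}$ and $C_\alpha'\le9$; substituting $\delta_m^2=48^2\cdot\frac{2\pi\log m}{m}$ gives $C_\alpha m^2\delta_m^2\ge4m\log m$, so the exponent is at most $-2m\log(m)+9m\le-m$ once $m$ (equivalently $n$) is large. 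When $m<\frac ne$ we have $\alpha^{-1}=\frac nm$ with $\log(\frac nm)>1$; the crude bounds $3+\log(\frac nm)\le4\log(\frac nm)$ and $\sqrt{\log(\frac nm)}\ge1$ give $(1+\sqrt{3+\log(\frac nm)})^2\le9\log(\frac nm)$, so $C_\alpha\ge\frac1{1152\pi\log(n/m)}$, while $C_\alpha'=6+3\log(\frac nm)$. The role of the factor $\sqrt{\log(n/m)}$ in $\delta_m$ is precisely to cancel the $\log(n/m)$ in $C_\alpha^{-1}$: with $\delta_m^2=165^2\cdot\frac{\log(n/m)\log n}m$ one gets $C_\alpha m^2\delta_m^2\ge\frac{165^2}{1152\pi}m\log n\ge7m\log n$, and since $2\log m+3\log(\frac nm)=3\log n-\log m\le3\log n$ the exponent is at most $-7m\log n+3m\log n+6m=-4m\log n+6m\le-m$ for $n$ large.

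Combining the two pieces, for all sufficiently large $n$ and every $m\in\{k_n,\ldots,n-1\}$,
\[\P_{n,m}\big[d_{BL}(\hat{\mu}_m,\mu_\alpha)\ge\delta_m\big]\le e^2e^{-m}+\tfrac{e}{2\pi}n\,e^{-m}\le n^2e^{-k_n}.\]
A union bound over the at most $n$ admissible values of $m$ then gives $\P\big[\exists\,m\in\{k_n,\ldots,n-1\}:d_{BL}(\hat\mu_m,\mu_\alpha)>\delta_m\big]\le n^3e^{-k_n}$. The hypothesis $k_n/\log(n)^2\to\infty$ implies $k_n/\log n\to\infty$, hence $k_n\ge5\log n$ for $n$ large, so this probability is $\le n^{-2}$ eventually and therefore summable in $n$. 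By the Borel--Cantelli lemma, almost surely only finitely many of these events occur, which is exactly the assertion that a.s.\ for all large $n$ one has $d_{BL}(\hat\mu_m,\mu_\alpha)\le\delta_m$ simultaneously for every $m\in\{k_n,\ldots,n-1\}$.

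I expect the only real obstacle to be the constant bookkeeping in the second paragraph: one must control how the $\alpha$-dependent constants $C_\alpha$ and $C_\alpha'$ of Theorem~\ref{T:main} degrade as $\alpha=\frac mn\to0$ --- there $C_\alpha$ shrinks like $(\log(n/m))^{-1}$ while $C_\alpha'$ grows like $\log(n/m)$ --- and verify that the threshold $\delta_m$, with its extra $\sqrt{\log(n/m)}$ factor in this regime, is large enough to overcome both effects uniformly over $m<n/e$. There is no conceptual difficulty beyond this; all of the probabilistic content sits in Theorem~\ref{T:main}.
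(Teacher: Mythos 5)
Your proposal is correct and follows essentially the same route as the paper: apply Theorem~\ref{T:main} with $r=\delta_m$, split into the regimes $m\ge n/e$ and $m<n/e$ to control the $\alpha$-dependence of $C_\alpha$ and $C_\alpha'$, and conclude by Borel--Cantelli. You are in fact a bit more careful than the paper in two spots --- you make the union bound over $m\in\{k_n,\dots,n-1\}$ explicit rather than simply declaring the bound ``summable,'' and your inequality $(1+\sqrt{3+\log(n/m)})^2\le 9\log(n/m)$ for $\log(n/m)\ge1$ is sharp exactly where needed, whereas the paper's replacement $3+\log(\alpha^{-1})\le 3\log(\alpha^{-1})$ actually requires $\log(\alpha^{-1})\ge 3/2$ rather than just $>1$.
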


Note that if $m=o(n)$, then $\delta_m$ is of the order
$\sqrt{\frac{\log(n)^2}{m}}$.  The
restriction on $k_n$ in the statement of the theorem thus implies that
with probability one,
\[\sup_{k_n\le m\le n-1}d_{BL}\left( \hat{\mu}_m , \mu_{\alpha} \right)\xrightarrow{n\to\infty}0.\]

Observe that, although the support of the empirical spectral measure
$\hat{\mu}_m$ is the disc of radius $\sqrt{\frac{n}{m}}$, the limiting spectral measure is supported on the unit disc; the following treats the question of how far into this intermediate regime the eigenvalues are likely to stray.

\begin{thm}\label{T:edge}
Let $z_1,\ldots,z_m$ be the eigenvalues of the top-left $m\times m$ block of $\sqrt{\frac{n}{m}}U$,
with joint law $\P_{n,m}$, and let $\alpha=\frac{m}{n}$.  Then
for any $\epsilon\in\left(0,\frac{1}{\sqrt{\alpha}}-1\right)$,
\[\P_{n,m}\left[\max_{1\le j\le m}|z_j|>1+\epsilon\right]\le\frac{e \left(1 - \alpha^m(1+\epsilon)^{2m} \right)}{2\pi\sqrt{n\alpha(1-\alpha)}\left(1 - \alpha(1+\epsilon)^2 \right)}\left[\frac{\left(1 - \alpha(1+\epsilon)^2 \right)^{(1-\alpha)}}{(1-\alpha)^{1-\alpha}\alpha^\alpha}\right]^n.\]
If $\epsilon\ge\frac{1}{\sqrt{\alpha}}-1$, then 
\[\P_{n,m}\left[\max_{1\le j\le m}|z_j|>1+\epsilon\right]=0.\]
\end{thm}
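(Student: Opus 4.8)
The plan is to estimate $\P_{n,m}\left[\max_j|z_j|>1+\epsilon\right]$ directly from the explicit joint eigenvalue density \eqref{E:density-unnormalized} by a union bound, reducing the problem to a one-point estimate. By exchangeability, $\P_{n,m}\left[\max_j|z_j|>1+\epsilon\right]\le m\,\P_{n,m}\left[|z_1|>1+\epsilon\right]$, so it suffices to control the marginal density of $|z_1|$. Writing $\rho_1(z)$ for the first marginal of \eqref{E:density-unnormalized}, one has $\rho_1(z)=\frac{1}{c_{n,m}}\left(1-\frac{m}{n}|z|^2\right)^{n-m-1}\int\prod_{2\le j<k\le m}|z_j-z_k|^2\prod_{2\le j\le m}|z_1-z_j|^2\left(1-\frac{m}{n}|z_j|^2\right)^{n-m-1}\,dz_2\cdots dz_m$; the key point is that the inner integral is \emph{increasing} in $|z_1|$ only through the $|z_1-z_j|^2$ factors, which for $|z_1|$ large behave comparably to $|z_1|^2$ after integrating the $z_j$ over their bounded support. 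So I expect to bound $\rho_1(z)\le C(n,m)\left(1-\frac{m}{n}|z|^2\right)^{n-m-1}|z|^{2(m-1)}$ for $|z|$ in the relevant range, where $C(n,m)$ collects the normalizing constant and the $z_2,\dots,z_m$ integral (which is itself a Selberg-type integral, explicitly a ratio that telescopes against $c_{n,m}$).

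Next I would identify the function $g(t)=\left(1-\alpha t^2\right)^{n-m-1}t^{2(m-1)}$ with $\alpha=\frac{m}{n}$ and observe that, on $t>1+\epsilon$, it is unimodal with a maximum at $t^2=\frac{m-1}{\alpha(n-2)}\approx\frac{1}{\alpha}$ — but that critical point exceeds the support bound $t^2<\frac{n}{m}=\frac1\alpha$ only marginally, so on the range $(1+\epsilon,\alpha^{-1/2})$ the function $g$ is decreasing once $\epsilon\ge\alpha^{-1/2}-1$ (giving the clean ``probability $=0$'' statement, which also follows trivially since the support of each $z_j$ is $\{|z|<\alpha^{-1/2}\}$) and is bounded by its value at $t=1+\epsilon$ times a geometric-type tail otherwise. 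Integrating $t\,g(t)$ over $(1+\epsilon,\alpha^{-1/2})$ in polar coordinates and bounding, one arrives at a factor $\left(1-\alpha(1+\epsilon)^2\right)^{n-m}(1+\epsilon)^{2m}$ divided by a linear term $\left(1-\alpha(1+\epsilon)^2\right)$ coming from the geometric sum, which matches the shape of the claimed bound after writing $n-m=n(1-\alpha)$ and $m=n\alpha$ and collecting the $\alpha^{-\alpha}(1-\alpha)^{-(1-\alpha)}$ constant from the normalizations via Stirling.

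The constant $\frac{e}{2\pi\sqrt{n\alpha(1-\alpha)}}$ is the tell-tale sign that $c_{n,m}$ and the Selberg integral are being estimated by Stirling's formula with explicit error control (the $e$ absorbing a $1+o(1)$), so a careful but routine application of Stirling to the product $\prod_{j=0}^{m-1}\binom{n-m+j-1}{j}^{-1}\frac{1}{n-m+j}$ is needed; the $1-\alpha^m(1+\epsilon)^{2m}$ numerator is exactly the partial geometric sum $\sum_{k=0}^{m-1}(\alpha(1+\epsilon)^2)^k\cdot$(something), reflecting that the marginal density is really a sum of $m$ terms (from expanding the Vandermonde against the extra point) rather than a single monomial $|z|^{2(m-1)}$. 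I expect the main obstacle to be precisely this bookkeeping: getting the one-point marginal density in closed form (or a sufficiently tight closed-form upper bound) rather than just an order-of-magnitude estimate, since the theorem asks for a fully explicit constant. The cleanest route is probably to use the known determinantal structure of the density \eqref{E:density-unnormalized} — the eigenvalues form a determinantal point process whose kernel against the weight $\left(1-\frac{m}{n}|z|^2\right)^{n-m-1}$ is diagonalized by monomials $z^k$, $k=0,\dots,m-1$ — so that $\rho_1(z)=\sum_{k=0}^{m-1}\frac{|z|^{2k}\left(1-\frac{m}{n}|z|^2\right)^{n-m-1}}{h_k}$ with $h_k$ an explicit Beta-type normalizing integral; integrating this term by term over $\{|z|>1+\epsilon\}$ and summing the resulting (essentially geometric) series yields exactly the stated expression, with the $\P=0$ case immediate from the support constraint.
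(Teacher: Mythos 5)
Your final route via the determinantal kernel is essentially the paper's proof. The union bound $\P_{n,m}[\max_j|z_j|>1+\epsilon]\le m\,\P_{n,m}[|z_1|>1+\epsilon]$ is the same estimate as Markov's inequality applied to the number of eigenvalues outside $B_{1+\epsilon}$, since the intensity of the determinantal process is $K_{n,m}(z,z)=m\rho_1(z)$; the paper phrases it as the latter. Your expression $\rho_1(z)=\sum_{k=0}^{m-1}\frac{|z|^{2k}(1-\alpha|z|^2)^{n-m-1}}{h_k}$ is, up to the factor of $m$, exactly the diagonal kernel $K_{n,m}(z,z)=\sum_{j=1}^m\frac{\alpha^j}{N_j}|z|^{2(j-1)}(1-\alpha|z|^2)^{n-m-1}$ with the Beta normalizations $N_j=\frac{\pi(j-1)!(n-m-1)!}{(n-m+j-1)!}$. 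Your earlier attempt to control $\rho_1$ by a single monomial $|z|^{2(m-1)}$ times the weight is lossier than needed and you correctly drop it; the low-$j$ terms contribute materially once an explicit constant is required.

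There is, however, a concrete gap in how you close the computation. The resulting series is not geometric, and ``summing the essentially geometric series'' would not produce the stated bound. After pulling the weight out of the integral via $(1-\alpha r^2)^{n-m-1}\le(1-\alpha(1+\epsilon)^2)^{n-m-1}$, one is left with
\[
\sum_{j=1}^m\frac{1}{2jN_j}\bigl(1-\alpha^j(1+\epsilon)^{2j}\bigr),
\]
which has no geometric structure. The paper bounds the parenthetical factor uniformly by $1-\alpha^m(1+\epsilon)^{2m}$ and then evaluates $\sum_{j=1}^m\frac{1}{2jN_j}=\frac{1}{2\pi}\sum_{j=1}^m\binom{n-m+j-1}{n-m-1}=\frac{1}{2\pi}\bigl[\binom{n}{n-m}-1\bigr]$ via the hockey stick identity; without that exact evaluation one does not reach $\binom{n}{m}$, and hence does not get the explicit Stirling constant $\frac{e}{2\pi\sqrt{n\alpha(1-\alpha)}}\bigl[(1-\alpha)^{1-\alpha}\alpha^\alpha\bigr]^{-n}$. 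Moreover, the ratio $\frac{1-\alpha^m(1+\epsilon)^{2m}}{1-\alpha(1+\epsilon)^2}$ in the theorem is an artifact of writing $(1-\alpha(1+\epsilon)^2)^{n-m-1}=\frac{(1-\alpha(1+\epsilon)^2)^{(1-\alpha)n}}{1-\alpha(1+\epsilon)^2}$, not the result of summing a geometric series in the integration step. Your observation that the $\P=0$ case for $\epsilon\ge\alpha^{-1/2}-1$ follows immediately from the support of the density (and of the kernel) is correct and is how the paper handles it.
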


This estimate requires some effort to parse.  Firstly, observe that choosing $\epsilon$ so that 
\begin{equation}\label{E:epsilon-lower-bound}(1+\epsilon)^2>\frac{1}{\alpha}\left[1-(1-\alpha)\left(\frac{\alpha}{e}\right)^{\frac{\alpha}{1-\alpha}}\right]\end{equation}
gives that 
\begin{equation*}\P_{n,m}\left[\max_{1\le j\le
      m}|z_j|>1+\epsilon\right]\le\frac{e \left(1 -
      \alpha^m(1+\epsilon)^{2m}
    \right)}{2\pi\sqrt{n\alpha(1-\alpha)}\left(1 -
      \alpha(1+\epsilon)^2 \right)}e^{-\alpha n}.\end{equation*}

Note that in the non-trivial case that $\alpha(1+\epsilon)^2<1$,
\[\frac{\left(1 - \alpha^m(1+\epsilon)^{2m} \right)}{\left(1 - \alpha(1+\epsilon)^2 \right)}=1+\alpha(1+\epsilon)^2+\cdot+(\alpha(1+\epsilon)^2)^{m-1}\le m,\]
so that
\begin{equation}\label{E:tail-bound-2}\P_{n,m}\left[\max_{1\le j\le
      m}|z_j|>1+\epsilon\right]\le\frac{e}{2\pi}\sqrt{\frac{m}{(1-\alpha)}}e^{-m}.\end{equation}
While the bound stated in Theorem \ref{T:edge} is formally stronger, we will use the simpler bound \eqref{E:tail-bound-2} in the following discussion, separated into three distinct regimes.
\begin{enumerate}
\item $m=o(n)$:

For $\frac{m}{n}=\alpha$ small, the lower bound on $(1+\epsilon)^2$ in \eqref{E:epsilon-lower-bound} is
\begin{align*}\frac{1}{\alpha}&\left[1-(1-\alpha)\left(\frac{\alpha}{e}\right)^{\frac{\alpha}{1-\alpha}}\right]\\&=\frac{1}{\alpha}\left[1-(1-\alpha)\exp\left\{\frac{\alpha}{1-\alpha}(\log(\alpha)-1)\right\}\right]\\&=\frac{1}{\alpha}\left[1-(1-\alpha)\left(1+\frac{\alpha}{1-\alpha}(\log(\alpha)-1)+O\left(\left(\frac{\alpha}{1-\alpha}(\log(\alpha)-1)\right)^2\right)\right)\right]\\&=2-\log(\alpha)+o(1).\end{align*}
If $m=o(n)$ and $m\ge2\log(n)$, then the bound in
\eqref{E:tail-bound-2} tends to zero at least as quickly as $n^{-\frac{3}{2}}$, and so it follows from the Borel--Cantelli lemma that if $m_n$ is any sequence with $m_n=o(n)$ and $m_n\ge 2\log(n)$, then for any $\delta>0$, with probability one,  for $n$ large enough the support of the empirical spectral measure $\hat{\mu}_{m_n}$ lies within the disc of radius $\sqrt{2+\delta+\log(\frac {n}{m_n})}$, as opposed to the \emph{a priori} support of the disc of radius $\sqrt{\frac{n}{m_n}}$.

\item There are $c>0$ and $C<1$ such that $cn\le m\le Cn$:

Here the bound in \eqref{E:tail-bound-2} tends to zero exponentially
with $n$, and in this case the lower bound on $\epsilon$ from \eqref{E:epsilon-lower-bound} results in a fixed radius $r_m$ (somewhat smaller than
$\sqrt{\frac{n}{m}}$ but still bounded away from one, in terms of $c$ and $C$), such that, if $m_n$ is a sequence with $cn\le m_n\le Cn$ for all $n$, then with probability one for $n$ large enough, $\hat{\mu}_{m_n}$ is supported in a disc of radius $r_{m_n}$.

\item $\frac{m}{n}\to 1$:

The bound in \eqref{E:tail-bound-2} tends to zero exponentially with
$n$, and 
for $\alpha$ tending to one, 
\[\frac{1}{\alpha}\left[1-(1-\alpha)\left(\frac{\alpha}{e}\right)^{\frac{\alpha}{1-\alpha}}\right]=2-\alpha+O((1-\alpha)^2).\]
It thus follows from the Borel--Cantelli lemma that for any
$\epsilon>0$, if $m_n$ is a
sequence with $m_n<n$ for each $n$ and $\frac{m_n}{n}\to1$, then with probability one, for $n$ large enough, the empirical spectral measure $\hat{\mu}_{m_n}$ is contained within a disc of radius $1+\epsilon$.

\end{enumerate}

\noindent {\bf Definitions and notation.}  Throughout the paper, $U$
will denote a Haar-distributed random unitary matrix in $\Unitary{n}$
and, for $1\le m < n$, $z_1,\ldots,z_m$ will denote the eigenvalues
of the top-left $m\times m$ block of $\sqrt{\frac{n}{m}}U$, with
associated spectral measure $\hat{\mu}_m$.  

The $L^1$-Wasserstein distance between probability measures $\mu$ and
$\nu$ is given by 
\[W_1(\mu,\nu)=\sup_{f}\left|\int fd\mu-\int fd\nu\right|,\]
where the supremum is taken over Lipschitz functions with Lipschitz
constant 1.
The bounded-Lipschitz distance between $\mu$ and $\nu$ is given by 
\[d_{BL}(\mu,\nu)=\sup_{f}\left|\int fd\mu-\int fd\nu\right|,\]
where the supremum is taken over functions which are bounded by 1 and
have Lipschitz constant bounded by 1.

We will use the following uniform version of Stirling's approximation,
which is an easy consequence of equation (9.15) in \cite{Feller}.

\begin{lemma} \label{L: stirling} For each positive integer $n$,
\begin{equation*}
\sqrt{2\pi}n^{n+\frac{1}{2}}e^{-n} \leq n! \leq en^{n+\frac{1}{2}}e^{-n}.
\end{equation*}
\end{lemma}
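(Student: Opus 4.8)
The plan is to read the stated double inequality off the refined form of Stirling's formula in Feller, namely that for every positive integer $n$,
\[\sqrt{2\pi}\,n^{n+\frac12}e^{-n} < n! < \sqrt{2\pi}\,n^{n+\frac12}e^{-n}\,e^{\frac{1}{12n}},\]
which is equation (9.15) of \cite{Feller}. The left-hand inequality is \emph{verbatim} the lower bound asserted in Lemma~\ref{L: stirling}, so there is nothing to do there, and the whole task reduces to deriving the constant $e$ in the upper bound from the factor $\sqrt{2\pi}\,e^{1/(12n)}$ above.

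The only point requiring care is that $\sqrt{2\pi}\,e^{1/12}$ is, numerically, just barely larger than $e$ (roughly $2.724$ versus $2.718$), so the crudest estimate $e^{1/(12n)}\le e^{1/12}$ is \emph{not} quite enough at $n=1$. I would therefore split into two cases. For $n=1$ one checks directly that $1!=1=e\cdot 1^{3/2}e^{-1}$, so the claimed bound holds with equality. For $n\ge 2$ one has $e^{1/(12n)}\le e^{1/24}$, and a one-line numerical check gives $\sqrt{2\pi}\,e^{1/24}<e$ (indeed $\sqrt{2\pi}\,e^{1/24}\approx 2.61$); combining this with the right-hand side of the displayed inequality yields $n!<\sqrt{2\pi}\,e^{1/24}\,n^{n+\frac12}e^{-n}<e\,n^{n+\frac12}e^{-n}$, as desired.

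Alternatively, the case split can be dispensed with by invoking the standard fact (which is essentially what Feller's argument establishes) that the ratio $n!\big/\big(\sqrt{2\pi}\,n^{n+\frac12}e^{-n}\big)$ is non-increasing in $n$; since its value at $n=1$ equals $e/\sqrt{2\pi}$, the ratio is at most $e/\sqrt{2\pi}$ for all $n\ge 1$, which is precisely the upper bound. Either way, there is no real obstacle here: the single subtlety is the numerical near-coincidence $\sqrt{2\pi}\,e^{1/12}>e$, which is what forces either the trivial $n=1$ case check or the appeal to monotonicity of the Stirling ratio.
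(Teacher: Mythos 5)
Your proof is correct and takes the same route the paper intends: the lemma is stated as ``an easy consequence of equation (9.15) in [Feller],'' and the paper gives no further detail. You have supplied the missing detail, and in doing so made a genuinely useful observation that the paper glosses over: the crude bound $e^{1/(12n)}\le e^{1/12}$ does \emph{not} give $\sqrt{2\pi}\,e^{1/12}\le e$ (since $\sqrt{2\pi}\,e^{1/12}\approx 2.724 > e\approx 2.718$), so the constant $e$ in the upper bound is sharp at $n=1$ and the derivation does require either the $n=1$ check (where equality holds) plus the $n\ge 2$ bound $\sqrt{2\pi}\,e^{1/24}<e$, or the monotonicity-of-the-Stirling-ratio argument you give as an alternative. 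Both of your variants are correct and slightly more careful than ``easy consequence'' suggests.
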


\section{Proofs of the Main Results}

We begin with Theorem \ref{T:edge}.
\begin{proof}[Proof of Theorem \ref{T:edge}]
The form of the eigenvalue density
\eqref{E:density-unnormalized}, specifically the presence of the
Vandermonde determinant, gives that $z_1,\ldots,z_m$ form a
determinantal point process on $\C$ with the kernel (with respect to
Lebesgue measure)

\begin{equation*} \begin{split}
K_{n,m}(z_1, z_2) & = \frac{m}{n} \sum_{j=1}^m \frac{1}{N_j} \left(\frac{m}{n}z_1\overline{z_2} \right)^{j-1} \left( 1-\frac{m}{n}|z_1|^2 \right)^{\frac{n-m-1}{2}}\left( 1-\frac{m}{n}|z_2|^2 \right)^{\frac{n-m-1}{2}} \\
& \qquad \qquad \times \ind{(0,\infty)}\left( 1-\frac{m}{n}|z_1|^2 \right) \ind{(0,\infty)}\left( 1-\frac{m}{n}|z_2|^2 \right),
\end{split}\end{equation*}

where the normalization factor $N_j$ is given by

\begin{equation*}
N_j = \frac{\pi (j-1)!(n-m-1)!}{(n-m+j-1)!}.
\end{equation*}

Let $B_{r}$ denote the ball of radius $r$, and let $\epsilon\in\left(0,\frac{1}{\sqrt{\alpha}}-1\right)$. Then the expected number of
$z_i$ outside $B_{1+\epsilon}$ is given by

\begin{equation*} \begin{split}
\E [ \mathcal{N}_{B_{1+\epsilon}^c}] & = \int_{B_{1+\epsilon}^c} K_{n,m}(z,z) dz \\
&= 2\pi \int_{1+\epsilon}^{\frac{1}{\sqrt{\alpha}}} \sum_{j=1}^m \frac{1}{N_j} \alpha^j r^{2(j-1)} \left( 1 - \alpha r^2 \right)^{n-m-1} r dr \\
& \leq 2\pi \left( 1 - \alpha(1+\epsilon)^2 \right)^{n-m-1}\sum_{j=1}^m \frac{1}{N_j} \alpha^j\int_{1+\epsilon}^{\frac{1}{\sqrt{\alpha}}} r^{2(j-1)} r dr \\
& = 2\pi \left( 1 - \alpha(1+\epsilon)^2\right)^{n-m-1} \sum_{j=1}^m \frac{1}{N_j} \left( \frac{1}{2j} \left(1-\alpha^j(1+\epsilon)^{2j} \right) \right) \\&\le 2\pi \left(1 - \alpha(1+\epsilon)^2 \right)^{n-m-1}\left(1 - \alpha^m(1+\epsilon)^{2m} \right) \sum_{j=1}^m \frac{1}{(2j)N_j}.
\end{split}\end{equation*}

The sum on the right can be computed using the hockey stick identity:

\begin{equation*}\begin{split}
\sum_{j=1}^m \frac{1}{(2j)N_j} & = \sum_{j=1}^m \frac{(n-m+j-1)!}{2\pi j!(n-m-1)!} \\
& = \frac{1}{2\pi}\sum_{j=1}^m\binom{n-m+j-1}{n-m-1}\\
& =\frac{1}{2\pi}\sum_{k=n-m}^{n-1}\binom{k}{n-m-1}\\
& =\frac{1}{2\pi}\left[\binom{n}{n-m}-1\right].
\end{split}\end{equation*}

Then

\begin{equation*}\begin{split}
\E [ \mathcal{N}_{B_{1+\epsilon}^c}] & \leq \left(1 - \alpha(1+\epsilon)^2 \right)^{n-m-1}\left(1 - \alpha^m(1+\epsilon)^{2m} \right)\binom{n}{m}\\&=\frac{\left(1 - \alpha^m(1+\epsilon)^{2m} \right)}{\left(1 - \alpha(1+\epsilon)^2 \right)}\left(1 - \alpha(1+\epsilon)^2 \right)^{(1-\alpha)n}\binom{n}{m}.
\end{split}\end{equation*}
The version of Stirling's formula in Lemma \ref{L: stirling} gives that, for $m=\alpha n$,
\[\binom{n}{m}\le \frac{e}{2\pi\sqrt{n\alpha(1-\alpha)}}\left[\frac{1}{(1-\alpha)^{1-\alpha}\alpha^\alpha}\right]^n.\]
and so by Markov's inequality, 
\begin{align*} \begin{split}
\P\left[\max_{1\le j\le m}|z_j|\ge 1+\epsilon \right]& = \mathbb{P} \left[ \mathcal{N}_{B_{1 + \epsilon}^c}  \geq 1 \right] \\
& \le \tfrac{e \left(1 - \alpha^m(1+\epsilon)^{2m} \right)}{2\pi\sqrt{n\alpha(1-\alpha)}\left(1 - \alpha(1+\epsilon)^2 \right)}\left[\tfrac{\left(1 - \alpha(1+\epsilon)^2 \right)^{(1-\alpha)}}{(1-\alpha)^{1-\alpha}\alpha^\alpha}\right]^n.
\end{split} \end{align*}

If $\epsilon\ge\frac{1}{\sqrt{\alpha}}-1$, then 
\[\P\left[\max_{1\le j\le m}|z_j|\ge 1+\epsilon \right]\le \P\left[\max_{1\le j\le m}|z_j|\ge\frac{1}{\sqrt{\alpha}}\right]=0,\]
since the eigenvalues of a principal submatrix of $U$ necessarily have modulus bounded by 1.

\end{proof}

We now proceed with Theorem \ref{T:main}.  The proof is an adaptation of the approach in \cite{ChaHarMai16}, using the framework of Coulomb gases.  Specifically,  
the form of the eigenvalue density given in Equation \eqref{E:density-unnormalized}  means that the $z_i$ can be viewed as the (random) locations of $m$ unit charges in a two-dimensional Coulomb gas with external potential, as follows.  If the energy $H_{n,m}(z_1,\ldots,z_m)$ is defined by 
\[H_{n,m}(z_1,\ldots,z_m)=\sum_{j\neq k}\log\left(\frac{1}{|z_j-z_k|}\right)+m\sum_{j=1}^mV_{n,m}(z_j),\]
with the potential $V_{n,m}(z)$ defined by
\[V_{n,m}(z)=\begin{cases}-\frac{n-m-1}{m}\log\left(1-\frac{m}{n}|z|^2\right),& |z|<\sqrt{\frac{n}{m}};\\\infty,& \mbox{otherwise},\end{cases}\]
then the Gibbs measure on $\C^m$ (taking the inverse temperature
$\beta$ to be 2) is
\[d\P_{n,m}(z_1,\ldots,z_m)=\frac{1}{Z_{n,m}}\exp\left\{-H_{n,m}(z_1,\ldots,z_m)\right\}d\lambda(z_1)\ldots d\lambda(z_m),\]
where $\lambda$ denotes Lebesgue measure on $\C$.  That is, the Gibbs
measure in this Coulomb gas model is exactly the same as the density of the eigenvalues of the top-left $m\times m$ block of $\sqrt{\frac{n}{m}}U$, and so the empirical measure of the charges
$z_1,\ldots,z_m$ has the same distribution as the empirical spectral
measure $\hat{\mu}_m$.  
This was  the viewpoint taken by Petz and R\'effy in \cite{PetRef05} to identify the large-$n$ limiting spectral measure; the limiting measure with density $f_\alpha$ as in \eqref{E:falpha} is exactly the equilibrium measure for
the 2-dimensional Coulomb gas model with potential
\[V_\alpha=\begin{cases}-\left(\frac{1}{\alpha}-1\right)\log\left(1-\alpha|z|^2\right),& |z|<\frac{1}{\sqrt{\alpha}};\\\infty,& \mbox{otherwise},\end{cases}.\]
 It
should be noted that the viewpoint here is slightly
removed from the usual Coulomb gas model, where the potential would
not depend on $m$ or $n$; allowing such a dependence is possible
because the approach taken here is non-asymptotic; i.e., $n$ and
$m$ are fixed throughout.

In recent work, Chafa\"i, Hardy and Ma\"ida \cite{ChaHarMai16} have
developed an approach to studying the non-asymptotic behavior of
Coulomb gases, using new inequalities they call Coulomb transport
inequalities.  Specifically, if $\energy(\mu)=\iint g(x-y)d\mu(x)d\mu(y)$ is the Coulomb energy,
with 
\[g(x)=\begin{cases}\log\frac{1}{|x|},&
  d=2;\\\frac{1}{|x|^{d-2}},&d>2\end{cases}\]
the $d$-dimensional Coulomb kernel, they showed that if $D$ is a compact
subset of $\R^d$, then there is a constant $C_D>0$ such that for any
pair of probability measures $\mu$ and $\nu$ supported on $D$ with
$\mathcal{E}(\mu), \mathcal{E}(\nu)<\infty,$
\[W_1(\mu,\nu)^2\le C_D\energy(\mu-\nu).\]

When comparing to the equilibrium measure $\mu_V$ of the Coulomb gas
model with potential $V$, this leads to the estimate
\begin{equation}\label{E:V-energy-distance-est}
d_{BL}(\mu,\mu_V)^2\le W_1(\mu,\mu_V)^2\le C_V\left[\energy_V(\mu)-\energy_V(\mu_V)\right],
\end{equation}
where $\energy_V$ is the modified energy functional
\begin{equation}\label{E:modified-energy}\energy_V(\mu)=\energy(\mu)+\int Vd\mu.\end{equation}

The estimate \eqref{E:V-energy-distance-est} is the key ingredient in
the proof of Theorem \ref{T:main}.  The proof follows the analysis in
\cite{ChaHarMai16} closely, although their analysis does not apply
directly to our potential.  In particular, certain technical lemmas in
\cite{ChaHarMai16}, e.g., Theorem 1.9, require
modifications because boundedness assumptions made there
are not satisfied by
$V_{n,m}$.

The central idea of the proof of Theorem \ref{T:main} is the following simple application of
the bound \eqref{E:V-energy-distance-est}.  Let ${\bf z}=(z_1,\ldots,z_m)$, and
let $\hat{\mu}_{\bf z}:=\frac{1}{m}\sum_{j=1}^m\delta_{z_j}$.  Let $\mu_\alpha$ have density $f_\alpha$, for $\alpha=\frac{m}{n}$.  Given $r>0$, 
\begin{align*}\P&\left[d_{BL}(\hat{\mu}_{\bf z},\mu_\alpha)>r\right]\\&\qquad=\frac{1}{Z_{n,m}}\int_{d_{BL}(\hat{\mu}_{\bf z},\mu_\alpha)>r}\exp\left\{-\sum_{j\neq
  k}\log\left(\frac{1}{|z_j-z_k|}\right)-m\sum_{j=1}^mV_{n,m}(z_j)\right\}d\lambda^n(z)\\&\qquad\approx \frac{1}{Z_{n,m}}\int_{d_{BL}(\hat{\mu}_{\bf z},\mu_\alpha)>r}\exp\left\{-m^2\energy_{V_{n,m}}(\hat{\mu}_{\bf z})\right\}d\lambda^n(z),\end{align*}
and on $\left\{z:d_{BL}(\hat{\mu}_{\bf z},\mu_\alpha)>r\right\}$,
\eqref{E:V-energy-distance-est} gives that
\[\exp\left\{-m^2\energy_{V_{n,m}}(\hat{\mu}_{\bf z})\right\}\le\exp\left\{-cm^2r^2+m^2\energy_{V_{n,m}}(\mu_\alpha)\right\}.\]
Of course, since the measures $\hat{\mu}_{\bf z}$ are singular, the approximate
inequality above is invalid, and so part of the argument is to mollify
the empirical measures under consideration.  Since our potential
$V_{n,m}$ is only finite on $\left\{|z|<\sqrt{\frac{n}{m}}\right\}$,
this requires in particular that the probability of any eigenvalues lying too close
to the boundary of this disc is small, which follows from Theorem \ref{T:edge}.  In fact,some further truncation is useful in order to obtain improved control on the constants.  Beyond that, all that is
really needed is to give estimates for the normalizing constant and
the modified Coulomb energy at the equilibrium measure.

The following lemma relates the energy
$H_{n,m}(x_1,\ldots,x_m)$ to the modified Coulomb energy of
the mollified spectral measure.

\begin{lemma}\label{L:reg} For ${\bf z}=(z_1,\ldots,z_m)\in\C^m$, let
  $\hat{\mu}_{\bf z}=\frac{1}{m}\sum_{j=1}^m\delta_{z_j}$.  That is,
  $\hat{\mu}_{\bf z}$ is the probability measure putting equal mass at each of
  the $z_j$.
For any $\epsilon > 0$, define 
\begin{equation*}
\hat{\mu}_{\bf z}^{(\epsilon)} := \hat{\mu}_{\bf z} \ast \lambda_{\epsilon},
\end{equation*}
where $\lambda_{\epsilon}$ is the uniform probability measure on the
ball $B_{\epsilon}$. Then for $z_1,\ldots,z_m\in B_{\frac{1}{\sqrt{\alpha}}-\sqrt{\epsilon}}$, with $\epsilon<\left(\frac{\alpha}{4+2\sqrt{\alpha}}\right)^2$,
\begin{equation*}
H_{n,m} \left( z_1, \ldots, z_m \right) \geq m^2
\mathcal{E}_{V_{n,m}} \left( \hat{\mu}_{\bf z}^{(\epsilon)} \right) - m
\mathcal{E} \left( \lambda_{\epsilon} \right) - \frac{m^2(1-\alpha)\epsilon}{2\alpha}.
\end{equation*}
\end{lemma}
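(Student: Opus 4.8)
The plan is to bound the logarithmic interaction term and the potential term in $H_{n,m}$ separately against the two parts of $m^2\energy_{V_{n,m}}(\hat{\mu}_{\bf z}^{(\epsilon)})$. Write $\nu_j$ for the uniform probability measure on the ball of radius $\epsilon$ centered at $z_j$, so that $\hat{\mu}_{\bf z}^{(\epsilon)}=\frac{1}{m}\sum_{j=1}^m\nu_j$. Expanding the energy and using that the logarithmic energy is translation invariant (so $\energy(\nu_j)=\energy(\lambda_\epsilon)$),
\[
\energy\bigl(\hat{\mu}_{\bf z}^{(\epsilon)}\bigr)=\frac{1}{m}\energy(\lambda_\epsilon)+\frac{1}{m^2}\sum_{j\neq k}\iint\log\frac{1}{|x-y|}\,d\nu_j(x)\,d\nu_k(y).
\]
For $j\neq k$ one may assume $z_j\neq z_k$, since otherwise $H_{n,m}({\bf z})=+\infty$ and the inequality is trivial. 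Using that $-\log|\cdot|$ is superharmonic --- so that its average over any ball does not exceed its value at the center --- first in the variable $y$ over the ball around $z_k$ and then in the variable $x$ over the ball around $z_j$, one obtains $\iint\log\frac{1}{|x-y|}\,d\nu_j\,d\nu_k\le\log\frac{1}{|z_j-z_k|}$, and hence
\[
\sum_{j\neq k}\log\frac{1}{|z_j-z_k|}\ \ge\ m^2\,\energy\bigl(\hat{\mu}_{\bf z}^{(\epsilon)}\bigr)-m\,\energy(\lambda_\epsilon).
\]

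For the potential term, since $m^2\int V_{n,m}\,d\hat{\mu}_{\bf z}^{(\epsilon)}=m\sum_{j=1}^m\int V_{n,m}(z_j+w)\,d\lambda_\epsilon(w)$, it is enough to show that for every $z$ with $|z|\le\frac{1}{\sqrt\alpha}-\sqrt\epsilon$,
\[
\int V_{n,m}(z+w)\,d\lambda_\epsilon(w)-V_{n,m}(z)\ \le\ \frac{(1-\alpha)\epsilon}{2\alpha};
\]
summing over $j$ and multiplying by $m$ then produces the error term in the statement. The hypothesis $\epsilon<\bigl(\frac{\alpha}{4+2\sqrt\alpha}\bigr)^2$ (which in particular gives $\epsilon<1$) keeps the ball of radius $\epsilon$ about $z$ inside $\{|\zeta|<\frac{1}{\sqrt\alpha}\}$, where $V_{n,m}$ is smooth and a direct computation gives
\[
\Delta V_{n,m}(\zeta)=\frac{4(n-m-1)\alpha}{m\,\bigl(1-\alpha|\zeta|^2\bigr)^2}\ <\ \frac{4(1-\alpha)}{\bigl(1-\alpha|\zeta|^2\bigr)^2}.
\]
A short estimate from the same hypothesis shows $1-\alpha|\zeta|^2\ge\sqrt{\alpha\epsilon}$ for all such $\zeta$, so $\Delta V_{n,m}\le M:=\frac{4(1-\alpha)}{\alpha\epsilon}$ on that ball. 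The function $\zeta\mapsto V_{n,m}(\zeta)-\frac{M}{4}|\zeta-z|^2$ is then superharmonic there, so its average over the ball is at most its value $V_{n,m}(z)$ at the center; since $\int w\,d\lambda_\epsilon(w)=0$ and $\int|w|^2\,d\lambda_\epsilon(w)=\frac{\epsilon^2}{2}$, rearranging gives $\int V_{n,m}(z+w)\,d\lambda_\epsilon(w)-V_{n,m}(z)\le\frac{M\epsilon^2}{8}=\frac{(1-\alpha)\epsilon}{2\alpha}$, as needed.

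Adding the two bounds,
\[
H_{n,m}({\bf z})\ \ge\ \Bigl(m^2\energy\bigl(\hat{\mu}_{\bf z}^{(\epsilon)}\bigr)-m\energy(\lambda_\epsilon)\Bigr)+\Bigl(m^2\!\!\int V_{n,m}\,d\hat{\mu}_{\bf z}^{(\epsilon)}-\frac{m^2(1-\alpha)\epsilon}{2\alpha}\Bigr),
\]
which is exactly the claimed inequality. The main obstacle is the potential estimate: $\Delta V_{n,m}$ blows up like $(1-\alpha|\zeta|^2)^{-2}$ as $\zeta$ approaches the hard wall $\{|\zeta|=\frac{1}{\sqrt\alpha}\}$, so the mollified points (and their $\epsilon$-neighborhoods) must be kept a definite distance away from it; this is precisely the role of the assumptions $|z_j|\le\frac{1}{\sqrt\alpha}-\sqrt\epsilon$ and $\epsilon<\bigl(\frac{\alpha}{4+2\sqrt\alpha}\bigr)^2$, and carefully tracking these bounds is what yields the explicit constant $\frac{1-\alpha}{2\alpha}$. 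By contrast, the interaction term is handled by the standard mollification step from the Coulomb-gas literature and loses nothing beyond the term $m\energy(\lambda_\epsilon)$.
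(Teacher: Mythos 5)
Your proof is correct and follows the same overall strategy as the paper: split $H_{n,m}$ into interaction and confinement pieces, pass to the mollified empirical measure, and control the mollification error on the potential via a bound on $\Delta V_{n,m}$. The one place you diverge is in how the two sub-estimates are obtained. For the interaction term, the paper simply cites Lemma~4.2 of \cite{ChaHarMai16}; you unpack that lemma and prove it directly using the sub-mean-value property of the superharmonic kernel $\log\frac{1}{|\cdot|}$ together with translation invariance of $\mathcal{E}(\nu_j)=\mathcal{E}(\lambda_\epsilon)$, which makes the argument self-contained at no cost. For the confinement error $\left(V_{n,m}\ast\lambda_\epsilon - V_{n,m}\right)(z)$, the paper uses Taylor's theorem with convexity of $V_{n,m}$ and the identity $\int\inprod{\Hess V\,u}{u}\,d\lambda_1 = \tfrac{1}{4}\Delta V$, whereas you observe that $\zeta\mapsto V_{n,m}(\zeta)-\tfrac{M}{4}|\zeta-z|^2$ is superharmonic once $M$ dominates $\Delta V_{n,m}$ on $\overline{B_\epsilon(z)}$, and read off the bound from $\int|w|^2\,d\lambda_\epsilon=\tfrac{\epsilon^2}{2}$. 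Both routes produce the identical Laplacian estimate $\Delta V_{n,m}\le \tfrac{4(1-\alpha)}{\alpha\epsilon}$ under the stated radius and $\epsilon$ restrictions, and hence the same constant $\tfrac{(1-\alpha)\epsilon}{2\alpha}$; your comparison-function version is arguably a bit cleaner since it sidesteps the (slightly delicate) interchange of supremum and integral in the Taylor argument. No gaps.
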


\begin{proof}Lemma 4.2 from
\cite{ChaHarMai16} gives that 
\begin{equation*}
H_{n,m} \left( z_1, \ldots, z_m \right) \geq m^2 \mathcal{E}_{V_{n,m}} \left( \hat{\mu}_{\bf z}^{(\epsilon)} \right) - m \mathcal{E} \left( \lambda_{\epsilon} \right) - m \sum_{i=1}^m \left( V_{n,m} \ast \lambda_{\epsilon} - V_{n,m} \right) \left( z_i \right),
\end{equation*}
so that the only task is to give an upper bound for $\left(V_{n,m} \ast \lambda_{\epsilon} - V_{n,m} \right) \left( z_i \right)$.

Let $0<\epsilon<\left(\frac{\alpha}{4+2\sqrt{\alpha}}\right)^2$, and suppose that $z<\frac{1}{\sqrt{\alpha}}-\sqrt{\epsilon}$.  Then in particular, $V_{n,m}(y)<\infty$ for $|y-z|<\epsilon$, so that 
\begin{equation*}\begin{split}
\left( V_{n,m} \ast \lambda_{\epsilon} - V_{n,m} \right) \left( z \right)
& = \int \left( V_{n,m} (z-\epsilon u) - V_{n,m}(z) \right) d\lambda_1 (u). 
\end{split}\end{equation*}
Note that by symmetry, $\int \inprod{\nabla V_{n,m} (z)}{u}
d\lambda_1(u) =0$ for fixed $z$. Moreover,  $V_{n,m}$ is convex,
so that $\Hess V_{n,m}$ is positive semi-definite; it thus follows from
Taylor's theorem that
\begin{equation*}\begin{split}
\left( V_{n,m} \ast \lambda_{\epsilon} - V_{n,m}\right) \left( z \right) & \leq \frac{\epsilon^2}{2} \underset{|y-z| \leq \epsilon}{\underset{y \in \R^2} {\sup}} \int \inprod{\Hess(V_{n,m})_yu}{u} d\lambda_1(u)\\
&\le \frac{\epsilon^2}{2} \underset{|y|<\frac{1}{\sqrt{\alpha}}-\sqrt{\epsilon}+\epsilon}{\sup} \frac{1}{4} \Delta V_{n,m}(y).
\end{split}\end{equation*}
If  $|y|<\frac{1}{\sqrt{\alpha}}-\sqrt{\epsilon}+\epsilon$, then
\[\Delta V_{n,m}(y)=\frac{4\left(1-\alpha-\frac{1}{n}\right)}{(1-\alpha|y|^2)^2}\le\frac{4(1-\alpha)}{(2\sqrt{\alpha}(\sqrt{\epsilon}-\epsilon)-\alpha(\sqrt{\epsilon}-\epsilon)^2)^2}\le\frac{4(1-\alpha)}{\alpha\epsilon},\]
for $\epsilon$ in the range specified above.

\end{proof}

In the proof of Theorem \ref{T:main}, we will use the following version of the Coulomb transport inequality
from \cite{ChaHarMai16}, which is an immediate consequence of Lemma 3.1 together with Theorem
1.1 of that paper.  The lemma refers to an \emph{admissible} external
potential $V$; we refer the reader to \cite{ChaHarMai16} for the
definition, which is satisfied for our potentials $V_{n,m}$.  A key
fact is that such a potential is associated with an equilibrium
measure $\mu_V$, which is the unique minimizer of the modified energy
$\mathcal{E}_V$ as defined in \eqref{E:modified-energy}.  For our potential
$V_{n,m}$, the equilibrium measure is $\mu_{\alpha}$ for
$\alpha=\frac{m}{n}$. 
\begin{lemma}[Coulomb Transport Inequality \cite{ChaHarMai16}] \label{L:transport}
Let $V$ be an admissible external potentialon $\R^d$ with equilibrium
measure $\mu_V$.  If $D \subset \mathbb{R}^d$ is compact then for any $\mu \in \mathcal{P}(\R^d)$ supported in $D$, 
\begin{equation*}
d_{BL}(\mu, \mu_{V})^2 \leq W_1(\mu, \mu_{V})^2 \leq C_{D\cup supp(\mu_{V})} \left( \mathcal{E}_{V}(\mu) - \mathcal{E}_{V}(\mu_{V}) \right),
\end{equation*}
where if $R>0$ is such that $D \cup supp(\mu_{V}) \subset B_R$,
then $C_{D\cup supp(\mu_{V})}$ can be taken to be $vol(B_{4R})$.
\end{lemma}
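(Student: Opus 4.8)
The plan is to assemble Lemma~\ref{L:transport} from three ingredients and chain the resulting inequalities. The first is the trivial domination $d_{BL}(\mu,\mu_V)\le W_1(\mu,\mu_V)$: the supremum defining $d_{BL}$ runs over functions that are simultaneously bounded by $1$ and $1$-Lipschitz, a subclass of the $1$-Lipschitz functions used for $W_1$, so $d_{BL}\le W_1$, and squaring preserves this since both sides are nonnegative. The second is the Coulomb transport inequality of \cite{ChaHarMai16} (their Theorem~1.1): if $\sigma$ is a signed measure of total mass zero whose support lies in a ball $B_R$, then $\bigl(\int f\,d\sigma\bigr)^2\le\vol(B_{4R})\,\energy(\sigma)$ for every $1$-Lipschitz $f$, so that $W_1(\mu,\nu)^2\le\vol(B_{4R})\,\energy(\mu-\nu)$ whenever $\mu,\nu\in\mathcal P(\R^d)$ are supported in $B_R$. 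The third is the energy-splitting bound for equilibrium measures (\cite{ChaHarMai16}, Lemma~3.1): for an admissible $V$ with equilibrium measure $\mu_V$ and any $\mu\in\mathcal P(\R^d)$, $\energy(\mu-\mu_V)\le\energy_V(\mu)-\energy_V(\mu_V)$. Granting all three, one takes $\nu=\mu_V$ and chooses $R$ with $D\cup\mathrm{supp}(\mu_V)\subset B_R$; then
\[
d_{BL}(\mu,\mu_V)^2\le W_1(\mu,\mu_V)^2\le\vol(B_{4R})\,\energy(\mu-\mu_V)\le\vol(B_{4R})\bigl(\energy_V(\mu)-\energy_V(\mu_V)\bigr),
\]
which is precisely the assertion. (Applying this in the proof of Theorem~\ref{T:main} additionally requires verifying that each potential $V_{n,m}$ is admissible in the sense of \cite{ChaHarMai16} with equilibrium measure $\mu_\alpha$, but that is part of the setup of the main proof rather than of this lemma.)

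For the energy-splitting bound I would invoke Frostman's variational characterization of the equilibrium measure: there is a constant $\ell$ (the modified Robin constant) such that, writing $U^{\mu_V}(x)=\int g(x-y)\,d\mu_V(y)$, one has $U^{\mu_V}+\tfrac12V\ge\ell$ quasi-everywhere and $U^{\mu_V}+\tfrac12V=\ell$ on $\mathrm{supp}(\mu_V)$. Setting $\zeta:=U^{\mu_V}+\tfrac12V-\ell\ge0$ (quasi-everywhere), so that $\zeta=0$ $\mu_V$-almost everywhere, one expands $\energy(\mu-\mu_V)=\energy(\mu)-2\int U^{\mu_V}\,d\mu+\energy(\mu_V)$ and uses $\int d\mu=\int d\mu_V=1$ to obtain the exact identity
\[
\energy_V(\mu)-\energy_V(\mu_V)=\energy(\mu-\mu_V)+2\int\zeta\,d\mu .
\]
If $\energy(\mu)<\infty$ then $\mu$ places no mass on polar sets, so $\int\zeta\,d\mu\ge0$ and the bound follows; if $\energy(\mu)=\infty$ then, an admissible $V$ being bounded below on compact sets, $\energy_V(\mu)=+\infty$ and the bound is trivial.

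The step I expect to be the real obstacle is the transport inequality with the clean constant $\vol(B_{4R})$. The mechanism is potential-theoretic: $g$ is, up to a dimensional constant $c_d\ge1$ (with $c_d=2\pi$ when $d=2$), the fundamental solution of $-\Delta$, so the potential $h=g\ast\sigma$ of the zero-mass measure $\sigma$ decays at infinity fast enough (like $|x|^{-1}$ for $d=2$, like $|x|^{-(d-1)}$ for $d>2$) that integration by parts gives $\int|\nabla h|^2=c_d\,\energy(\sigma)$, which we may assume finite, else the inequality is vacuous. Because $\int f\,d\sigma$ depends only on $f|_{B_R}$, the function $f$ may be replaced by a $1$-Lipschitz function supported in $B_{4R}$ without changing the integral, so $\int|\nabla f|^2\le\vol(B_{4R})$; then $c_d\int f\,d\sigma=\int\nabla f\cdot\nabla h$ and Cauchy--Schwarz give $\bigl(\int f\,d\sigma\bigr)^2\le c_d^{-1}\vol(B_{4R})\,\energy(\sigma)\le\vol(B_{4R})\,\energy(\sigma)$, and taking the supremum over $f$ finishes the argument. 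The delicate points --- the cutoff-and-extension step that yields exactly the radius $4R$, and the tracking of $c_d$ --- are precisely what \cite{ChaHarMai16} carry out, so in practice the lemma is obtained by quoting their Theorem~1.1 and Lemma~3.1 and verifying that our standing hypotheses meet theirs.
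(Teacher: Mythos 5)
Your proposal is correct and takes essentially the same approach as the paper: the paper simply states that this lemma is an immediate consequence of Lemma~3.1 and Theorem~1.1 of \cite{ChaHarMai16}, which is exactly the chaining you perform (together with the trivial $d_{BL}\le W_1$). Your added sketches of the Frostman identity behind Lemma~3.1 and of the integration-by-parts plus Cauchy--Schwarz argument behind Theorem~1.1 are accurate, and you correctly defer the delicate cutoff constant to the cited source.
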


\begin{proof}[Proof of Theorem \ref{T:main}]
Fix $r>0$.  By Theorem \ref{T:edge} and the discussion following it, it is possible to choose
$\eta_\alpha>0$ such that $1+\eta_\alpha<\frac{1}{\sqrt{\alpha}}$ and
so that
\[\P_{n,m}\left[\max_{1\le j\le m}|z_j|>1+\eta_\alpha\right]\le\frac{e}{2\pi}\sqrt{\frac{m}{1-\alpha}}e^{-m}.\]
In particular, we may take
$1+\eta_\alpha=\sqrt{3+\log(\alpha^{-1})}$, (although when $\alpha\to 1$,
$\eta_\alpha$ may in fact be taken to be any fixed positive number).  Take $\epsilon\in\left(0,\left(\frac{\alpha}{4+2\sqrt{\alpha}}\right)^2\right)$ as in Lemma \ref{L:reg}, such that also $1+\eta_\alpha<\frac{1}{\sqrt{\alpha}}-\sqrt{\epsilon}$. Let
\[A_{\alpha,r}:=\left\{(z_1,\ldots,z_m)\in\C^m:|z_j|<1+\eta_\alpha, 
  j=1,\ldots,n,\mbox{ and } d_{BL} \left( \hat{\mu}_{\bf z}, \mu_{\alpha} \right) \geq r \right\}.\]
The probability that the eigenvalues of the top-left $m\times m$ block
of $\sqrt{\frac{n}{m}}U$ lie in the set $A_{\alpha,r}$ is given by
\begin{equation}\begin{split}\label{E:prob-A_r}
\mathbb{P}_{n,m} (A_{\alpha,r}) &= \frac{1}{Z_{n,m}} \int_{A_{\alpha,r}} e^{-H_{n,m}( z_1, \ldots, z_m) } \prod_{i=1}^m d\lambda(z_i) \\
&\leq \frac{1}{Z_{n,m}} \int_{A_{\alpha,r}} e^{-\left[ m^2 \mathcal{E}_{V_{n,m}} \left( \hat{\mu}_{\bf z}^{(\epsilon)} \right) - m \mathcal{E} \left( \lambda_{\epsilon} \right) - \frac{m^2(1-\alpha)\epsilon}{2\alpha}\right] } \prod_{i=1}^m d\lambda(z_i) \\
&\leq \frac{1}{Z_{n,m}} e^{-\left[ m^2 \inf_{A_{\alpha,r}} \mathcal{E}_{V_{n,m}} \left( \hat{\mu}_{\bf z}^{(\epsilon)} \right) - m \mathcal{E} \left( \lambda_{\epsilon} \right) - \frac{m^2(1-\alpha)\epsilon}{2\alpha}\right] } \left(\frac{\pi}{\alpha}\right)^m,
\end{split}\end{equation}
by Lemma \ref{L:reg}.

The normalizing constant $Z_{n,m}$ can be bounded in terms of $\mu_\alpha$ as follows:
\begin{align*}
\log&(Z_{n,m})\\&=\log\idotsint e^{-\sum_{j\neq k}\log\left(\frac{1}{|z_j-z_k|}\right)+(n-m-1)\sum_{j=1}^m\log\left(1-\alpha|z_j|^2\right)}d\lambda(z_1)\cdots d\lambda(z_m)\\&=\log\idotsint e^{-\sum_{j\neq k}\log\left(\frac{1}{|z_j-z_k|}\right)+(n-m+1)\sum_{j=1}^m\log\left(1-\alpha|z_j|^2\right)}\left(\frac{1}{2(1-\alpha)}\right)^m d\mu_\alpha(z_1)\cdots d\mu_\alpha(z_m)\\&\ge -m\log(2(1-\alpha)) \\&\qquad+\int \left[-\sum_{j\neq k}\log\left(\frac{1}{|z_j-z_k|}\right)+(n-m+1)\sum_{j=1}^m\log\left(1-\alpha|z_j|^2\right)\right] d\mu_\alpha(z_1)\cdots d\mu_\alpha(z_m)\\&=-m\log(2(1-\alpha))-m(m-1)\mathcal{E}(\mu_\alpha)+\left(\frac{n-m+1}{n-m-1}\right)m^2\int V_{n,m}d\mu_\alpha,
\end{align*}
where the inequality is by Jensen's inequality.

Observe that
\begin{equation*}\begin{split}
m^2 \inf_{A_{\alpha,r}} \mathcal{E}_{V_{n,m}} \left( \hat{\mu}_{\bf
    z}^{(\epsilon)} \right) &= m^2 \inf_{A_{\alpha,r}} \left(
  \mathcal{E}_{V_{n,m}} \left( \hat{\mu}_{\bf z}^{(\epsilon)} \right)
  -\mathcal{E}_{V_{n,m}} \left(\mu_{\alpha} \right)\right) +m^2
\mathcal{E}_{V_{n,m}} \left(\mu_{\alpha} \right)\\ &= m^2
\inf_{A_{\alpha,r}} \left( \mathcal{E}_{V_{n,m}} \left( \hat{\mu}_{\bf
      z}^{(\epsilon)} \right) -\mathcal{E}_{V_{n,m}}
  \left(\mu_{\alpha} \right)\right)
+m^2\left[\mathcal{E}(\mu_\alpha)+\int V_{n,m}d\mu_\alpha\right],
\end{split}\end{equation*}
and by Lemma \ref{L:transport}, 
\begin{align*}
\inf_{A_{\alpha,r}} \left( \mathcal{E}_{V_{n,m}} \left( \hat{\mu}_{\bf z}^{(\epsilon)} \right) - \mathcal{E}_{V_{n,m}} \left( \mu_{\alpha} \right) \right) &\geq \frac{1}{16 \pi (1+\eta_\alpha+\epsilon)^2} \inf_{A_{\alpha,r}} d_{BL} \left( \hat{\mu}_{\bf z}^{(\epsilon)}, \mu_{\alpha} \right)^2.
\end{align*}
Since $d_{BL}(\hat{\mu}^{(\epsilon)}_{\bf z},\hat{\mu}_{\bf z})\le\epsilon$, 
\begin{equation*}\begin{split}
\frac{1}{2}d_{BL} \left( \hat{\mu}_{\bf z}, \mu_{\alpha} \right)^2 & \leq \frac{1}{2} \left( d_{BL} \left(\hat{\mu}_{\bf z}^{(\epsilon)}, \mu_{\alpha} \right) + d_{BL} \left( \hat{\mu}_{\bf z}^{(\epsilon)}, \hat{\mu}_{\bf z} \right) \right)^2 \\
& \leq d_{BL} \left( \hat{\mu}_{\bf z}^{(\epsilon)}, \mu_{\alpha} \right)^2 + d_{BL} \left( \hat{\mu}_{\bf z}^{(\epsilon)}, \hat{\mu}_{\bf z} \right)^2 \leq d_{BL} \left( \hat{\mu}_{\bf z}^{(\epsilon)}, \mu_{\alpha} \right)^2 + \epsilon^2.
\end{split}\end{equation*}
It follows that 
\begin{equation*}
\inf_{A_{\alpha,r}} \left( \mathcal{E}_{V_{n,m}} \left( \hat{\mu}_{\bf z}^{(\epsilon)} \right) - \mathcal{E}_{V_{n,m}} \left( \mu_{\alpha} \right) \right) \geq \frac{1}{16\pi(1+\eta+\epsilon)^2} \left( \frac{r^2}{2} - \epsilon^2 \right),
\end{equation*}
and so combining the estimate in \eqref{E:prob-A_r} with the analysis above gives that 
\begin{align*}\P_{n,m}(A_{\alpha,r})&\le
\exp\left\{-\frac{
                                      m^2}{16\pi(1+\eta_\alpha+\epsilon)^2}\left(\frac{r^2}{2}-\epsilon^2\right)-m\mathcal{E}(\mu_\alpha)-\frac{2(n-m)m^2}{n-m-1}\int V_{n,m}d\mu_\alpha\right.\\&\qquad \qquad \qquad\left.+m\left(\log\left(\frac{2\pi(1-\alpha)}{\alpha }\right)\right) +\frac{m^2(1-\alpha)\epsilon}{2\alpha}+m\mathcal{E}(\lambda_\epsilon)\right\}\\&\le \exp\left\{-\frac{ m^2}{16\pi(1+\eta_\alpha+\epsilon)^2}\left(\frac{r^2}{2}-\epsilon^2\right)+m\left(\log\left(\frac{2\pi(1-\alpha)}{\alpha }\right)\right) \right.\\&\qquad\qquad \qquad\qquad \qquad\qquad\qquad\qquad \qquad\qquad\left.+\frac{m^2(1-\alpha)\epsilon}{2\alpha}+m\mathcal{E}(\lambda_\epsilon)\right\}.\end{align*}

Now, the Coulomb energy of $\lambda_{\epsilon}$ is 
\begin{equation*}\begin{split}
\mathcal{E}(\lambda_{\epsilon}) 
& = \int \int \log \left( \frac{1}{|\epsilon x - \epsilon y|} \right) d\lambda_1(x) d\lambda_1 (y)=-\log(\epsilon) + \mathcal{E}(\lambda_1) = -\log(\epsilon)+ \frac{1}{4}=\log\left(\frac{e^{1/4}}{\epsilon}\right), 
\end{split}\end{equation*}
and so 

\begin{equation}\begin{split}\label{E:final-bound}\P_{n,m}&(A_{\alpha,r})\\&\le \exp\left\{-\frac{m^2}{16\pi(1+\eta_\alpha+\epsilon)^2}\left(\frac{r^2}{2}-\epsilon^2\right)+m\left(\log\left(\frac{2\pi
                                    e^{1/4}(1-\alpha)}{\alpha\epsilon }\right)\right)+\frac{m^2(1-\alpha)\epsilon}{2\alpha} \right\}.\end{split}\end{equation}

Now take
\[\epsilon=\min\left\{\left(\frac{\alpha}{2(2+\sqrt{\alpha})}\right)^2,\left(\frac{1}{\sqrt{\alpha}}-(1+\eta_\alpha)\right)^2,\frac{8\sqrt{\pi}(1+\eta_\alpha)}{m}, 1+\eta_\alpha,\frac{2\alpha}{(1-\alpha)m^2}\right\}.\]
The analysis above required that
$\epsilon\le\left(\frac{\alpha}{4+2\sqrt{\alpha}}\right)^2$ and that
$1+\eta_\alpha\le\frac{1}{\sqrt{\alpha}}-\sqrt{\epsilon}$, which is
guaranteed by the first two terms in the minimum.  

For the first term in the estimate \eqref{E:final-bound}, first
estimating the $\epsilon$ in the denominator by $1+\eta_\alpha$ and
then the factor of $\epsilon^2$ in the numerator by
$\frac{8\sqrt{\pi}(1+\eta_\alpha)}{m}$ gives that
\[-\frac{m^2}{16\pi(1+\eta_\alpha+\epsilon)^2}\left(\frac{r^2}{2}-\epsilon^2\right)\le
  -\frac{m^2}{64\pi(1+\eta_\alpha)^2}\left(\frac{r^2}{2}-\epsilon^2\right)\le -\frac{m^2r^2}{128\pi(1+\eta_\alpha)^2}+1.\]
For the second term of \eqref{E:final-bound}, one verifies each of the
5 possible choices of $\epsilon$ above: in all cases, 
\[m\left(\log\left(\frac{2\pi
                                    e^{1/4}(1-\alpha)}{\alpha\epsilon
                                  }\right)\right)\le
                              2m\log(m)+mC_\alpha',\]
where 
\begin{align*}C_{\alpha}'&=\max\left\{2\log(6)+3\log(\alpha^{-1}),
  -2\log(1-\sqrt{\alpha}(1+\eta_\alpha)),
  -\log(\alpha(1+\eta_\alpha))\right\}+\log(2\pi e^{1/4})\\&\le2\log(6)+3\log(\alpha^{-1})
                                                            +\log(2\pi
                                                             e^{1/4})\\&\le
                                                            6+3\log(\alpha^{-1}),\end{align*}
since $\eta_\alpha>0$ and $\sqrt{\alpha}\eta_\alpha\le 1$.
Since we take $\epsilon\le\frac{2\alpha}{(1-\alpha)m^2}$, the final
term inside the exponential in the estimate \eqref{E:final-bound} is
bounded by 1.  All together, then
\[\P_{n,m}(A_{\alpha,r})\le \exp\left\{-C_\alpha m^2r^2+2m\log(m)+mC_\alpha'+2\right\},\]
where
$C_\alpha=\frac{1}{128\pi(1+\eta_\alpha)^2}\ge\frac{1}{128\pi(1+\sqrt{3+\log(\alpha^{-1})})^2}$
and $C_\alpha'=6+3\log(\alpha^{-1})$.
\end{proof}

The proof of Theorem \ref{T:union} then follows from Theorem \ref{T:main} and an application of the Borel-Cantelli Lemma.
\begin{proof}[Proof of Theorem \ref{T:union}]
If $m \geq \frac{n}{e}$, then in Theorem \ref{T:main}, $C_\alpha$ can
be taken to be $\frac{1}{1152\pi}$ and $C_\alpha'$ can be taken to be
$9$.  Choosing
\begin{align*}
\delta_m & =  \sqrt{ \frac{ 4\log m}{C_\alpha m}}=48\sqrt{\frac{2\pi \log(m)}{m}},
\end{align*}
Theorem \ref{T:main} gives that
\begin{align*}
\mathbb{P} \left[ d_{BL} \left( \hat{\mu}_m , \mu_{\alpha} \right) \geq \delta_m \right] & \leq e^{\left\{2- C_{\alpha}m^2 \delta_m^2 +2 m\log m + C_{\alpha}'m \right\} } +\frac{e}{2\pi}\sqrt{\frac{m}{1-\alpha}}e^{-m}\\
& \le e^{2-2m\log(m)+C_\alpha'm}+ne^{-\frac{n}{e}},
\end{align*}
which is summable.

If instead $m < \frac{n}{e}$,  then in Theorem \ref{T:main} we may
take $C_\alpha=\frac{1}{128\pi(1+\sqrt{3\log(\alpha^-1)})^2}$ and
$C_\alpha'=9\log(\alpha^{-1})$.  Choosing
\begin{align*}
\delta_m & = \sqrt{ \frac{ 9\log n}{C_\alpha m}} \leq \frac{24(1+\sqrt{3\log(\frac{n}{m})})\sqrt{2\pi\log(n)}}{\sqrt{m}}\le\frac{165\sqrt{\log(\frac{n}{m})\log(n)}}{\sqrt{m}},
\end{align*}
Theorem \ref{T:main} gives that
\begin{align*}
\mathbb{P} \left[ d_{BL} \left( \hat{\mu}_m , \mu_{\alpha} \right) \geq \delta_m \right] & \leq e^{\left\{2- C_{\alpha}m^2 \delta_m^2 + 2m\log m + mC_{\alpha}' \right\} }+\frac{e}{2\pi}\sqrt{\frac{m}{1-\alpha}}e^{-m}
\\ & \le e^{2-7m\log(m)} +\frac{e}{2\pi}\sqrt{\frac{m}{1-\alpha}}e^{-m},
\end{align*}
which is summable since $m\ge k_n$ and $\frac{k_n}{\log(n)^2}\to\infty$.
The claimed result thus follows from the Borel-Cantelli lemma.
\end{proof}
\bibliographystyle{plain}
\bibliography{truncations}

\end{document}